\journal{Journal of \LaTeX\ Templates}
\newcommand{\en}{\mathbb N}
\newcommand{\er}{\mathbb R}
\newtheorem{theorem}{Theorem}
\newtheorem{proposition}[theorem]{Proposition}
\newtheorem{definition}[theorem]{Definition}
\newtheorem{question}[theorem]{Question}
\newdefinition{remark}[theorem]{Remark}
\newproof{proof}{Proof}
\newproof{pot}{Proof of Theorem \ref{hlavni veta}}
\begin{document}

\begin{frontmatter}

\title{Classification of the spaces $C_p^*(X)$ within the Borel-Wadge hierarchy for a projective space $X$}


\author[MU]{Martin Dole\v zal\fnref{MD}}
\cortext[mycorrespondingauthor]{Corresponding author}
\fntext[MD]{The author was supported by RVO: 67985840.}
\ead{dolezal@math.cas.cz}

\author[MFF]{Benjamin Vejnar\corref{mycorrespondingauthor}\fnref{BV}}
\fntext[BV]{The author was supported by the grant GA\v CR 14-06989P.
The author is a junior researcher in the University Center for Mathematical Modeling, Applied Analysis and Computational Mathematics (Math MAC).}
\ead{vejnar@karlin.mff.cuni.cz}

\address[MU]{Mathematical Institute, Czech Academy of Sciences,\\
\v Zitn\'a 25, 115 67 Praha 1, Czech Republic}

\address[MFF]{Charles University in Prague, Faculty of Mathematics and Physics,\\
Ke Karlovu 3, 121 16 Praha 2, Czech Republic}

\begin{abstract}

We study the complexity of the space $C^*_p(X)$ of bounded continuous functions with the topology of pointwise convergence. We are allowed to use descriptive set theoretical methods, since for a separable metrizable space $X$, the measurable space of Borel sets in $C^*_p(X)$ (and also in the space $C_p(X)$ of all continuous functions) is known to be isomorphic to a subspace of a standard Borel space. It was proved by A. Andretta and A. Marcone
that if $X$ is a $\sigma$-compact metrizable space, then the measurable spaces $C_p(X)$ and $C^*_p(X)$ are standard Borel and if $X$ is a metrizable analytic space which is not $\sigma$-compact then the spaces of continuous functions are Borel-$\Pi^1_1$-complete. 
They also determined under the assumption of projective determinacy (\textsf{PD}) the complexity of $C_p(X)$ for any projective space $X$ and asked whether a similar result holds for $C^*_p(X)$.

We provide a positive answer, i.e. assuming \textsf{PD} we prove, that if $n \geq 2$ and if $X$ is a separable metrizable space which is in $\Sigma^1_n$ but not in $\Sigma^1_{n-1}$ then the measurable space $C^*_p(X)$ is Borel-$\Pi^1_n$-complete.
This completes under the assumption of \textsf{PD} the classification of Borel-Wadge complexity of $C^*_p(X)$ for $X$ projective.

\end{abstract}

\begin{keyword}

Wadge hierarchy \sep function spaces \sep pointwise convergence \sep projective determinacy

\MSC[2010] 03E15 \sep 28A05 \sep 54C35

\end{keyword}

\end{frontmatter}


\section{Introduction}

First of all, we recall the needed terminology. Most of the definitions in this introductory section are taken from \citep{Kechris} (and \citep{AM}). In Definitions \ref{def1} and \ref{def2}, we recall the Wadge hierarchy.

\begin{definition}
\label{def1}

Let $X$ and $Y$ be topological spaces and let $A$, $B$ be subsets of $X$, $Y$, respectively. We say that $A$ is Wadge reducible to $B$ and write $(A,X) \leq_W (B,Y)$ (or simply $A \leq_W B$ if the spaces $X$ and $Y$ are understood) if there exists a continuous map $f \colon X \rightarrow Y$ (called a Wadge reduction of $A$ to $B$) such that $A = f^{-1}(B)$.

\end{definition}

\begin{definition}
\label{def2}

Let $\Gamma$ be a class of sets in Polish spaces. Let $X$ be a Polish space and $A$ be a subset of $X$. We say that $A$ is $\Gamma$-hard in $X$ if for any zero-dimensional Polish space $Y$ and any $B \in \Gamma(Y)$, we have $(B,Y) \leq_W (A,X)$. If, moreover, $A \in \Gamma(X)$, we say that $A$ is $\Gamma$-complete in $X$.

\end{definition}

In this paper, we are mostly interested in the Borel-Wadge hierarchy, where the notions of topology and continuity are replaced by notions of $\sigma$-algebra and measurability. This is recalled in Definitions \ref{def3}, \ref{sbs} and \ref{def4}.

\begin{definition}
\label{def3}

Let $X$ and $Y$ be measurable spaces and let $A$, $B$ be subsets of $X$, $Y$, respectively. We say that $A$ is Borel-Wadge reducible to $B$ and write $(A,X) \leq_B (B,Y)$ (or simply $A \leq_B B$ if the spaces $X$ and $Y$ are understood) if there exists a measurable map $f \colon X \rightarrow Y$ (called a Borel-Wadge reduction of $A$ to $B$) such that $A = f^{-1}(B)$.

\end{definition}

\begin{definition}
\label{sbs}

A measurable space $(X, \mathcal S)$ is called a standard Borel space if there is a Polish space $(Y, \tau)$ and an isomorphism (of measurable spaces) $f$ of $(X, \mathcal S)$ onto $(Y, \mathcal B(\tau))$ where $\mathcal B(\tau)$ is the Borel $\sigma$-algebra on $Y$ generated by the topology $\tau$.

\end{definition}

\begin{definition}
\label{def4}

Let $\Gamma$ be a class of sets in standard Borel spaces. Let $X$ be a standard Borel space and $A$ be a subset of $X$. We say that $A$ is Borel-$\Gamma$-hard in $X$ if for any standard Borel space $Y$ and any $B \in \Gamma(Y)$, we have $(B,Y) \leq_B (A,X)$. If, moreover, $A \in \Gamma(X)$, we say that $A$ is Borel-$\Gamma$-complete in $X$.

\end{definition}

The projective classes $\Sigma^1_n$ or $\Pi^1_n$, $n \in \en$, (see e.g. \citep[37.A]{Kechris}) are most often considered as classes of sets in Polish spaces but they can be also considered as classes of sets in standard Borel spaces due to Definition \ref{def5}.

\begin{definition}
\label{def5}

Let $\Gamma$ be one of the classes $\Sigma^1_n$ or $\Pi^1_n$, $n \in \en$, of sets in Polish spaces. Let $X$ be a standard Borel space and let $A$ be a subset of $X$. We say that $A \in \Gamma(X)$ if for some Polish space $Y$ and some Borel isomorphism $f \colon X \rightarrow Y$ (or equivalently, for any Polish space $Y$ and any Borel isomorphism $f \colon X \rightarrow Y$), we have $f(A) \in \Gamma (Y)$.

\end{definition}

\begin{remark}
\label{vysvetleni}

Suppose that $\Gamma$ is one of the classes $\Sigma^1_n$ or $\Pi^1_n$, $n \in \en$. Let $(X, \tau)$ be a Polish space and $A$ be a subset of $X$ which is $\Gamma$-hard in $X$ (resp. $\Gamma$-complete in $X$). If we consider $A$ as a subset of the standard Borel space $(X, \mathcal B(X))$ (i.e., of $X$ endowed with the Borel $\sigma$-algebra generated by $\tau$) then $A$ is also Borel-$\Gamma$-hard in $X$ (resp. Borel-$\Gamma$-complete in $X$). This easily follows from the fact that any standard Borel space $(Y, \mathcal S)$ can be endowed with a zero-dimensional Polish topology $\nu$ such that the Borel $\sigma$-algebra generated by $\nu$ equals to $\mathcal S$ (see e.g. \citep[Exercise 13.5]{Kechris}).

\end{remark}

Definition \ref{def6} enables us to describe the projective degree of any separable metrizable topological space.

\begin{definition}
\label{def6}

Let $\Gamma$ be one of the classes $\Sigma^1_n$ or $\Pi^1_n$, $n \in \en$, of sets in Polish spaces and let $X$ be a separable metrizable space. We say that $X$ is in $\Gamma$ if for some Polish space $Y$ and some homeomorphism $f$ of $X$ into $Y$ (or equivalently, for any Polish space $Y$ and any homeomorphism $f$ of $X$ into $Y$), we have $f(X) \in \Gamma (Y)$.

We say that a separable metrizable space is projective if it is in $\Sigma^1_n$ for some $n \in \en$ (or equivalently, if it is in $\Pi^1_n$ for some $n \in \en$).

\end{definition}

Similarly, due to Definition \ref{def7}, we can describe the projective degree of an arbitrary measurable space which can be embedded into a standard Borel space. Unlike the previous definitions, we have not found any explanation of the correctness of this definition so we provide at least a short explanation here. Let $\Gamma$ be one of the classes $\Sigma^1_n$ or $\Pi^1_n$, $n \in \en$, of sets in standard Borel spaces. Suppose that $X$ is a measurable space, $Y$, $Z$ are standard Borel spaces, $f$ is an isomorphism (of measurable spaces) of $X$ into $Y$ and $g$ is an isomorphism (of measurable spaces) of $X$ into $Z$. Then $f(X)$ is in $\Gamma(Y)$ (resp. Borel-$\Gamma$-hard in $Y$ or Borel-$\Gamma$-complete in $Y$) if and only if $g(X)$ is in $\Gamma(Z)$ (resp. Borel-$\Gamma$-hard in $Z$ or Borel-$\Gamma$-complete in $Z$). Indeed, by \citep[Exercise 12.3]{Kechris}, there are Borel sets $A \subseteq Y$, $B \subseteq Z$ with $f(X) \subseteq A$, $g(X) \subseteq B$ and a Borel isomorphism $\Phi \colon A \rightarrow B$ extending the Borel isomorphism $g \circ f^{-1}$ of $f(X)$ and $g(X)$. 
\begin{diagram}[height=1.4em,width=1.em]
$$X$$ & \rTo^f & $$f(X)$$& $$\subseteq$$& $$A$$ & $$\subseteq$$ & $$Y$$\\
&\rdTo_g&&&\dTo>\Phi&&\\
&&$$g(X)$$&$$\subseteq$$ & $$B$$& $$\subseteq$$ & $$Z$$\\
\end{diagram}
From this, the conclusion easily follows and we can formulate the definition.

\begin{definition}
\label{def7}

Let $\Gamma$ be one of the classes $\Sigma^1_n$ or $\Pi^1_n$, $n \in \en$, of sets in standard Borel spaces and let $X$ be a measurable space. We say that $X$ is in $\Gamma$ (resp. Borel-$\Gamma$-hard or Borel-$\Gamma$-complete) if for some standard Borel space $Y$ and some isomorphism (of measurable spaces) $f$ of $X$ into $Y$ (or equivalently, for any standard Borel space $Y$ and any isomorphism (of measurable spaces) $f$ of $X$ into $Y$), the set $f(X)$ is in $\Gamma(Y)$ (resp. Borel-$\Gamma$-hard in $Y$ or Borel-$\Gamma$-complete in $Y$).

\end{definition}

Let $X$ be a separable metrizable topological space. We denote by $C_p(X)$ (resp. $C_p^*(X)$) the measurable space of all real continuous (resp. all real bounded continuous) functions on $X$ equipped with the Borel $\sigma$-algebra generated by the topology of pointwise convergence on $X$. The Borel-Wadge degree of the measurable spaces $C_p(X)$ and $C_p^*(X)$ was already studied e.g. in \citep{Christensen, DM, Ok, AM}, all the relevant results being summarized in \citep{AM}. By these results, it is already known that if $X$ is $\sigma$-compact then both $C_p(X)$ and $C_p^*(X)$ are standard Borel spaces. And if $X$ is in $\Sigma^1_1$ but not $\sigma$-compact then both $C_p(X)$ and $C_p^*(X)$ are Borel-$\Pi^1_1$-complete (see \citep[Corollary 3.4]{AM}). This completely classifies the Borel-Wadge degree of these spaces for $X$ in $\Sigma^1_1$. Now suppose that $X$ is projective but not in $\Sigma^1_1$. Then both $C_p(X)$ and $C_p^*(X)$ are in $\Pi^1_n$ where $n \geq 2$ is the first such that $X$ is in $\Sigma^1_n$ (see \citep[Lemma 2.3 and the last paragraph in Section 2]{AM}). The precise Borel-Wadge degree of $C_p(X)$ in this case was also studied in \citep{AM} under the additional assumption of projective determinacy (henceforth denoted by \textsf{PD}). The principle of \textsf{PD} states that every infinite game $G(\en,X)$ with a projective payoff set $X \subseteq \en^{\en}$ is determined (for more detailed information, we refer to \citep{Kechris}). It was shown in \citep[Theorem 4.3]{AM} that if $n \geq 2$ is the first such that $X$ is in $\Sigma^1_n$ then $C_p(X)$ is Borel-$\Pi^1_n$-complete (under \textsf{PD}). So the Borel-Wadge degree of $C_p(X)$ for $X$ projective is also completely classified (under \textsf{PD}). But the proof of \citep[Theorem 4.3]{AM} uses unbounded functions in an essential way, and so the Borel-Wadge degree of $C_p^*(X)$ for $X$ projective remained unresolved. Instead, the following question was posed in \citep{AM}.

\begin{question}[{\citep[Problem 4.4]{AM}}]

Assume \emph{\textsf{PD}}. Let $X$ be a separable metrizable projective space which is not in $\Sigma^1_1$. Let $n \geq 2$ be the first such that $X$ is in $\Sigma^1_n$. Is the measurable space $C_p^*(X)$ Borel-$\Pi^1_n$-complete?

\end{question}

In this paper, we positively answer this question by proving the following main theorem.

\begin{theorem}
\label{hlavni veta}

Assume \emph{\textsf{PD}}. Let $X$ be a separable metrizable projective space which is not in $\Sigma^1_1$. Let $n \geq 2$ be the first such that $X$ is in $\Sigma^1_n$. Then the measurable space $C^*_p(X)$ is Borel-$\Pi^1_n$-complete.

\end{theorem}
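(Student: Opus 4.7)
Since $C_p^*(X)\in\Pi^1_n$ is known from prior work, only Borel-$\Pi^1_n$-hardness remains. My first impulse is to black-box the Andretta--Marcone reduction $r\colon Z\to C_p(X)\subseteq\er^D$ witnessing hardness of $C_p(X)$ (with $D$ a countable dense subset of $X$) and post-compose with pointwise application of a homeomorphism $\phi\colon\er\to(-1,1)$. The forward direction works: if $r(z)=f|_D$ for continuous $f$, then $\phi\circ r(z)=(\phi\circ f)|_D$ is the restriction of a bounded continuous function. However, the converse fails in general: a bounded continuous $h\colon X\to[-1,1]$ agreeing with $\phi\circ r(z)$ on $D$ may attain the values $\pm 1$ at points of $X\setminus D$, so $\phi^{-1}\circ h$ need not extend to a continuous function on $X$. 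Since Andretta--Marcone use unboundedness of $r(z)$ along sequences from $D$ converging to a limit point as their discontinuity marker, this obstacle is fundamental to their construction and must be sidestepped.

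The plan is therefore to build a reduction from scratch in which discontinuity is signalled by two-value \emph{oscillation} rather than unboundedness. Fix a $\Pi^1_n$-complete set $P$ in a standard Borel space $Z$. I would construct a Borel map $z\mapsto f_z\in[0,1]^X$ such that $f_z\in C_p^*(X)$ iff $z\in P$. For $z\in P$, $f_z$ is continuous by construction (for instance identically $0$). For $z\notin P$, using a $\Sigma^1_n$ representation $X=\pi(A)$ with $A\in\Pi^1_{n-1}(X\times\en^\en)$ and a $\Pi^1_n$ description of $P$, a witness to $z\notin P$ furnishes a ``bad point'' $x(z)\in X$ together with two sequences $(d_i^0),(d_i^1)\subseteq D$ with $d_i^0,d_i^1\to x(z)$; we set $f_z(d_i^0)=0$ and $f_z(d_i^1)=1$, guaranteeing that $f_z|_D$ cannot be the restriction of any continuous function on $X$, let alone a bounded one. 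Under \textsf{PD}, the necessary witnesses are selected in a Borel fashion using $\Sigma^1_n$-uniformization and scales, as was done in the Andretta--Marcone treatment of $C_p(X)$.

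The principal obstacle is the correctness direction $z\notin P\Rightarrow f_z\notin C_p^*(X)$, which amounts to showing that the oscillation coding blocks every continuous bounded extension of $f_z|_D$, not merely the ``intended'' extension. This is ensured by having both sequences $(d_i^0)$ and $(d_i^1)$ converge in $X$ to the same point $x(z)$: continuity at $x(z)$ would force a single value there, contradicting the presence of limits $0$ and $1$ along dense sequences in $D$. The secondary issue is Borel measurability of $z\mapsto f_z|_D\in\er^\omega$, which reduces to Borel measurability of each coordinate $z\mapsto f_z(d_i)$ and follows from Borelness of the underlying witness selection. A recursive execution mirroring the $\Sigma^1_n$ structure of $X$ for arbitrary $n\geq 2$ is the remaining bookkeeping; this is where \textsf{PD} is essential, exactly as in Andretta--Marcone, to keep the witness functions at each level of the projective hierarchy Borel.
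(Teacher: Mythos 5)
Your diagnosis of why post-composing the Andretta--Marcone reduction with a homeomorphism onto $(-1,1)$ fails, and your key idea of replacing unboundedness by bounded two-value oscillation as the discontinuity marker, both match the actual mechanism of the paper (which uses $a \mapsto \sin(1/d(a,K))$ in place of $a \mapsto 1/d(a,K)$). But your implementation has a genuine gap. You define $f_z$ by cases: identically $0$ when $z \in P$, and an oscillating function built from witnesses when $z \notin P$. A Borel-Wadge reduction must be a Borel map $z \mapsto f_z|_D$, and a definition that branches on membership in the $\Pi^1_n$-complete set $P$ is not Borel; the reduction has to be a single uniform formula whose output happens to detect membership. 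Relatedly, the claim that the witnesses can be ``selected in a Borel fashion using $\Sigma^1_n$-uniformization and scales'' is not available: under \textsf{PD} the uniformizing functions at level $n$ of the projective hierarchy are themselves only projectively measurable at that level, and a Borel selection of witnesses for a complete set at level $n$ would collapse the hierarchy. This is also not what Andretta--Marcone do: their hardness proof, and this paper's, reduces from the $\Pi^1_n$-hard set $\mathcal K(Z \setminus X)$ inside the hyperspace $\mathcal K(Z \setminus D)$ (hardness supplied by Wadge's Lemma under \textsf{PD} together with the Kechris--Louveau--Woodin lemma), so no witness selection is ever needed and \textsf{PD} enters only through Wadge's Lemma.

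The second missing ingredient is geometric. In the paper the reduction is the single continuous formula $\phi(K) = \left( \sin \left( 1/d(a,K) \right) \right)_{a\in D}$, and the oscillation near a point $x \in K \cap X$ is not prescribed by hand but must be \emph{proved} to occur: one needs $d(\cdot,K)$ to take a full interval of values $[0,\varepsilon)$ on every small neighborhood of $x$, which requires local connectedness of the ambient compactification. This forces two steps absent from your sketch: (i) reducing the general case to the nowhere locally compact case via a transfinite derivative (repeatedly deleting relatively compact open sets) combined with Proposition \ref{redukce}, which gives $(\tilde C^*_p(H,D),\er^D) \leq_B (\tilde C^*_p(X,D\cup E),\er^{D\cup E})$ for closed $H \subseteq X$; and (ii) Dijkstra's theorem that a nowhere locally compact separable metrizable space has a compactification which is a Peano continuum. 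Without some such structural input, turning a ``bad point together with two sequences'' into a globally consistent, Borel-in-$z$ element of $\er^D$ is precisely the part of the argument that remains to be done.
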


Under \textsf{PD}, this concludes the classification of the Borel-Wadge degree of $C_p^*(X)$ for $X$ projective. As in \citep{AM}, we use \textsf{PD} to know that if $n \in \en$, $X$ is a Polish space and $A$ is a subset of $X$ which is not in $\Pi^1_n$ then $A$ is $\Sigma^1_n$-hard in $X$. If $X$ is zero-dimensional, this follows (under \textsf{PD}) from an easy analogy of \citep[Theorem 21.14 (Wadge's Lemma)]{Kechris}. The general case can be reduced to the previous one by \citep[Exercise 13.5]{Kechris}.

One of the main ideas of the proof of Theorem \ref{hlavni veta} is the same as in \citep{AM}, i.e. providing a Borel-Wadge reduction of the $\Pi^1_n$-hard subset $\mathcal K(W \setminus X)$ of $\mathcal K(W)$ (where $W = Z \setminus D$, $D$ is a countable dense subset of $X$ and $Z$ is a metric completion of $X$) to $C^*_p(X)$. This is done almost in the same way as in \citep[proof of Theorem 4.3]{AM} in the particular case of $X$ being nowhere locally compact. The only refinement is hidden in the fact that in this case, the completion $Z$ of $X$ can be chosen to be a Peano continuum due to \citep[Corollary 7]{Dijkstra}. The general case is then reduced to this particular one by using Proposition \ref{redukce} which seems to be very intuitive but not trivial.

\section{Proof of the main theorem}

By the well known Tietze extension theorem, every real continuous function on a closed subspace $H$ of a metric space $X$ can be extended to a real continuous function on $X$. We will need the following version of this theorem since it provides a simple formula for the extension. This version of the Tietze theorem is due to F.~Riesz and was published in Ker\'ekj\'art\'o's book \citep{Ke} but probably the most accessible source is \citep{Husek}.

\begin{theorem}
\label{Tietze}

Let $(X,d)$ be a metric space and $H$ be a closed subset of $X$. Let $f \colon H \rightarrow [1,2]$ be continuous. Then the function $F \colon X \rightarrow \er$ defined by

$$ F(x) =
\begin{cases}
f(x) & \text{if } x \in H \\
\inf \left\{ f(h) \frac {d(x,h)} {d(x,H)} \colon h \in H \right\} & \text{if }x \in X \setminus H
\end{cases}
$$
is continuous with values in the interval $[1,2]$.

\begin{remark}
\label{remark}

Assume the hypothesis and notation of Theorem \ref{Tietze}. Let $D$ be an arbitrary dense subset of $H$. Then for every $x \in X \setminus H$, we clearly have
$$ F(x) = \inf \left\{ f(a) \frac {d(x,a)} {d(x,H)} \colon a \in D \right\}. $$

\end{remark}

\end{theorem}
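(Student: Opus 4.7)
The plan is to verify, in order: (a) that $F$ takes values in $[1,2]$; (b) that $F$ is continuous at every point of the open set $X\setminus H$; and (c) that $F$ is continuous at every point of $H$. Item (a) I would dispose of first: the bounds $f\ge 1$ and $d(x,h)\ge d(x,H)$ force every term of the infimum to be at least $1$, and choosing $h_n\in H$ with $d(x,h_n)\to d(x,H)$ and using $f(h_n)\le 2$ then gives $F(x)\le 2$.

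For (b), I would fix $x_0\in X\setminus H$, write $g_h(x):=f(h)\,d(x,h)/d(x,H)$, and treat the two one-sided estimates separately. The upper bound is soft: take a near-minimiser $h^{*}$ at $x_0$; the single function $g_{h^{*}}$ is continuous at $x_0$ (its denominator $d(\cdot,H)$ is nonzero there), so $F(x)\le g_{h^{*}}(x)$ is eventually below $g_{h^{*}}(x_0)+\varepsilon<F(x_0)+2\varepsilon$. The substantive step is a uniform lower bound: the triangle inequalities $d(x,h)\ge d(x_0,h)-d(x,x_0)$ and $d(x,H)\le d(x_0,H)+d(x,x_0)$, combined with the crucial observation $d(x_0,h)\ge d(x_0,H)$ valid for all $h\in H$, show that for $\eta:=d(x,x_0)/d(x_0,H)<1$,
$$
g_h(x)\;\ge\;g_h(x_0)\cdot\frac{1-\eta}{1+\eta}
$$
uniformly in $h\in H$. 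Taking infima gives $F(x)\ge F(x_0)(1-\eta)/(1+\eta)\to F(x_0)$ as $x\to x_0$.

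For (c), I would fix $x_0\in H$ and $\varepsilon>0$, choose $\delta_0>0$ so that $|f(h)-f(x_0)|<\varepsilon$ on $H\cap B(x_0,\delta_0)$, and note that the case $x\in H$ is trivial. For $x\in X\setminus H$ near $x_0$, the upper bound is produced by fixing $\alpha>1$ close to $1$ and picking $h\in H$ with $d(x,h)<\alpha\,d(x,H)$: then $d(h,x_0)\le(\alpha+1)d(x,x_0)<\delta_0$ once $x$ is close enough to $x_0$, so $f(h)<f(x_0)+\varepsilon$ and $F(x)\le\alpha(f(x_0)+\varepsilon)$. For the lower bound I would partition $H=H_1\cup H_2$ with $H_1=H\cap B(x_0,\delta_0)$: on $H_1$, $f(h)>f(x_0)-\varepsilon$ and $d(x,h)/d(x,H)\ge 1$ give $g_h(x)\ge f(x_0)-\varepsilon$; on $H_2$, additionally demanding $d(x,x_0)<\delta_0/4$ forces $d(x,H)<\delta_0/4$ while $d(x,h)\ge 3\delta_0/4$, so $g_h(x)\ge 3$, which trivially dominates $f(x_0)-\varepsilon$. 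Remark \ref{remark} is then free: for fixed $x\notin H$ the map $h\mapsto g_h(x)$ is continuous on $H$, so its infimum over any dense $D\subseteq H$ agrees with that over $H$. The hard part will be spotting the uniform inequality in (b); once the denominators $d(x_0,h)$ are controlled from below by $d(x_0,H)$, pointwise continuity of each $g_h$ upgrades to continuity of the infimum, and the same metric idea governs the bookkeeping in (c).
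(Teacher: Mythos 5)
Your proof is correct and complete. Note, however, that the paper itself gives no proof of this theorem: it is quoted as a classical result of F.~Riesz (published in Ker\'ekj\'art\'o's book, with \citep{Husek} as the accessible reference), so there is nothing in the paper to compare your argument against; what you have written is a self-contained verification of the cited formula. The substantive points all check out. The uniform lower bound in (b) is the right mechanism: with $t=d(x,x_0)$, $s=d(x_0,H)$, $a=d(x_0,h)$, the inequality
$$\frac{a-t}{s+t}\;\ge\;\frac{a}{s}\cdot\frac{s-t}{s+t}$$
reduces after clearing denominators to $s(a-t)\ge a(s-t)$, i.e.\ to $at\ge st$, which is exactly your observation $a\ge s$; combined with the one-function upper estimate this gives continuity of the infimum on the open set $X\setminus H$, where pointwise continuity of the $g_h$ alone would not suffice. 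In (c) the splitting $H=H_1\cup H_2$ works because the far part $H_2$ contributes terms $g_h(x)\ge 3$, which exceed the a priori bound $F\le 2$ and hence never realize the infimum; here the lower bound $f\ge 1$ is used essentially (the statement is false in this form for functions into $[0,1]$, which is why the theorem normalizes the range to $[1,2]$). The deduction of Remark~\ref{remark} from the continuity of $h\mapsto f(h)\,d(x,h)/d(x,H)$ on $H$ is also correct. One cosmetic remark: in (a) you do not actually need a minimizing sequence for the upper bound if you only want $F(x)\le 2\,d(x,h)/d(x,H)$ for $h$ near-optimal, but the argument as written is fine.
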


For a separable metrizable space $X$ and a countable dense subset $D$ of $X$, we put
$$ \tilde C_p^*(X,D) = \{(r_a)_{a \in D} \in \er^D \colon \exists f \in C_p^*(X) \ \forall a \in D \ f(a) = r_a \}. $$
Let $\phi \colon C_p^*(X) \rightarrow \er^D$ be defined by $\phi(f) = (f(a))_{a \in D}$, $f \in C_p^*(X) $. It was shown in \citep[Lemma 2.2 and the last paragraph of Section 2]{AM} that $\phi$  is an isomorphism of the measurable space $C_p^*(X)$ and the measurable subspace $\tilde C_p^*(X,D)$ of the standard Borel space $\er^D$ (which is equipped with the Borel $\sigma$-algebra generated by the product topology). So we only need to examine the Borel-Wadge degree of the set $\tilde C_p^*(X,D)$ in $\er^D$. Observe also that the countable dense subset $D$ of $X$ can be chosen arbitrarily.

\begin{proposition}
\label{redukce}

Let $X$ be a separable metrizable space and $H$ be a closed subspace of $X$. Let $D$, $E$ be countable dense subsets of $H$, $X \setminus H$ respectively (so that $D \cup E$ is countable dense in $X$). Then $(\tilde C^*_p(H, D), \er^D) \leq_B (\tilde C^*_p(X, D \cup E), \er^{D \cup E})$.

\end{proposition}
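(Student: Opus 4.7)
My plan is to construct an explicit Borel map $\Phi \colon \er^D \to \er^{D \cup E}$ whose restriction to the $D$-coordinates is the identity and which, when applied to the trace of a bounded continuous $f \in C_p^*(H)$, returns the trace on $D \cup E$ of a bounded continuous extension of $f$ to $X$. The backward direction of the required equivalence is then automatic: if $\Phi((r_a)) = (g(c))_{c \in D \cup E}$ for some $g \in C_p^*(X)$, then $g|_H \in C_p^*(H)$ and $(g|_H)(a) = \Phi((r_a))(a) = r_a$ for each $a \in D$, so $(r_a) \in \tilde C_p^*(H,D)$. Only the forward direction together with Borel measurability of $\Phi$ requires real work.

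The construction uses the Riesz--Tietze formula of Theorem \ref{Tietze} together with Remark \ref{remark}, which rewrites the extension outside $H$ as an infimum indexed by the countable dense set $D$. Since that formula is stated for target $[1,2]$, I would precompose with a fixed homeomorphism $\sigma \colon \er \to (1,2)$, e.g.\ $\sigma(r) = 3/2 + \arctan(r)/\pi$. Given $(r_a) \in \er^D$, set $s_a = \sigma(r_a)$ and, for $b \in E$,
\[ F_{(r_a)}(b) = \inf\left\{ s_a \, \frac{d(b,a)}{d(b,H)} \colon a \in D \right\} \in [1,2]. \]
Define $\Phi((r_a))(a) = r_a$ for $a \in D$, and for $b \in E$ put $\Phi((r_a))(b) = \sigma^{-1}(F_{(r_a)}(b))$ whenever $F_{(r_a)}(b) \in (1,2)$, with an arbitrary fixed value (say $0$) otherwise. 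Each coordinate of $\Phi$ is Borel in $(r_a)$, because $F_{(r_a)}(b)$ is a countable infimum of continuous functions of $(r_a)$, hence upper semicontinuous, and $\sigma^{-1}$ is continuous on $(1,2)$.

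For the forward direction, suppose $(r_a) = (f(a))_{a \in D}$ with $f \in C_p^*(H)$. Boundedness of $f$ places $\sigma \circ f$ into some closed subinterval $[\alpha,\beta] \subset (1,2)$, and Theorem \ref{Tietze} with Remark \ref{remark} identifies $F_{(r_a)}$ on $X \setminus H$ with the continuous Tietze--Riesz extension of $\sigma \circ f$. The estimate $F_{(r_a)}(b) \geq \alpha$ is immediate from $d(b,a) \geq d(b,H)$, while $F_{(r_a)}(b) \leq \beta$ follows by taking a sequence $a_n \in D$ with $d(b,a_n) \to d(b,H)$. Hence $F_{(r_a)}(X \setminus H) \subseteq [\alpha,\beta] \subset (1,2)$, the junk branch of $\Phi$ is never activated, and $\sigma^{-1} \circ F_{(r_a)}$ is a bounded continuous extension of $f$ to $X$ whose trace on $D \cup E$ is precisely $\Phi((r_a))$. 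The point I expect to be delicate is exactly this bounded confinement: the argument relies on the image of $\sigma \circ f$ being compactly contained in the open interval $(1,2)$, so that the unbounded map $\sigma^{-1}$ remains well-defined and bounded on the whole extension; for inputs outside $\tilde C_p^*(H,D)$ no such control is available, but the vacuous backward direction makes it unnecessary.
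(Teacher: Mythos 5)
Your proof is correct, and it is organized genuinely differently from the paper's. The paper splits the reduction into two steps: first it reduces $\tilde C^*_p(H, D)$ to $\tilde C_p^*(H, D) \cap [1,2]^D$ by a boundedness-dependent rescaling $\psi_n \colon [-n,n] \rightarrow [1,2]$ applied to the $D$-coordinates (with unbounded sequences left alone), and only then applies the Riesz--Tietze formula, sending inputs outside $[1,2]^D$ to a fixed unbounded element of $\er^{D \cup E}$; both implications of each step then need explicit verification. You instead leave the $D$-coordinates untouched and perform the rescaling purely internally, conjugating the Tietze infimum by a fixed homeomorphism $\sigma \colon \er \rightarrow (1,2)$. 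This makes the backward implication vacuous, exactly as you say: any $g \in C^*_p(X)$ interpolating $\Phi((r_a)_{a \in D})$ restricts to a witness on the closed set $H$ because $\Phi((r_a)_{a \in D})$ agrees with $(r_a)_{a \in D}$ on $D$, so all the paper's case analysis for non-extendable and unbounded inputs disappears. The point you flag as delicate is handled correctly: for a genuine trace of a bounded $f$, the image of $\sigma \circ f$ lies in a compact subinterval $[\alpha,\beta] \subset (1,2)$, the bounds $d(b,a) \geq d(b,H)$ and density of $D$ in $H$ confine the Tietze extension to $[\alpha,\beta]$, and hence $\sigma^{-1}$ returns a bounded continuous extension of $f$; the junk branch on the Borel set where the infimum hits an endpoint of $[1,2]$ keeps $\Phi$ well defined and Borel (each $E$-coordinate being an upper semicontinuous infimum of countably many continuous functions of $(r_a)_{a \in D}$). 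The key lemma --- the Riesz--Tietze formula with the infimum taken over the countable dense set $D$ --- is the same in both arguments; what your version buys is the elimination of the paper's step (i) and of one entire direction of the verification.
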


\begin{proof}

We may suppose that the cardinality of $X$ is infinite since the other case is trivial. In two steps, we will show that
$$ (\tilde C^*_p(H, D), \er^D) \leq_B (\tilde C_p^*(H, D) \cap [1,2]^D, \er^D) \leq_B (\tilde C^*_p(X, D \cup E), \er^{D \cup E}). $$

(i) $(\tilde C^*_p(H, D), \er^D) \leq_B (\tilde C_p^*(H, D) \cap [1,2]^D, \er^D)$

\noindent For $n \in \en$, let $\psi_n$ be a homeomorphism of the interval $[-n, n]$ onto $[1,2]$. We define $\phi \colon \er^D \rightarrow \er^D$ by
$$ \phi((r_a)_{a \in D}) =
\begin{cases}
(\psi_n(r_a))_{a \in D} & \text{if } (r_a)_{a \in D} \in [-n,n]^D \\
& \text{and } n \in \en \text{ is the least with this property}, \\
(r_a)_{a \in D} & \text{if } (r_a)_{a \in D} \in \er^D \setminus \bigcup\limits_{n \in \en} [-n,n]^D.
\end{cases}
$$
Then $\phi$ is obviously Borel measurable. We will show that $\phi$ is the required Borel-Wadge reduction of $\tilde C^*_p(H, D)$ to $\tilde C_p^*(H, D) \cap [1,2]^D$.

Suppose that $(r_a)_{a \in D} \in \tilde C^*_p(H, D)$ and let $n \in \en$ be the first such that $(r_a)_{a \in D}$ is bounded by $n$. Then there is $f \in C_p^*(X)$ with values in the interval $[-n, n]$ which extends $(r_a)_{a \in D}$ from $D$ to $X$. But then $\psi_n \circ f$ is a continuous function with values in the interval $[1, 2]$ extending $\phi((r_a)_{a \in D}) = (\psi_n(r_a))_{a \in D}$ from $D$ to $X$, and so $\phi((r_a)_{a \in D}) \in \tilde C_p^*(H, D) \cap [1,2]^D$.

Now suppose that $(r_a)_{a \in D} \in \er^D \setminus \tilde C^*_p(H, D)$. If $(r_a)_{a \in D}$ is bounded then it has no continuous extension from $D$ to $X$. Let $n \in \en$ be the first such that $(r_a)_{a \in D}$ is bounded by $n$, then neither $\phi((r_a)_{a \in D}) = (\psi_n(r_a))_{a \in D}$ has a continuous extension from $D$ to $X$, and so $\phi((r_a)_{a \in D}) \in \er^D \setminus \tilde C_p^*(H, D) \subseteq \er^D \setminus \left( \tilde C_p^*(H, D) \cap [1,2]^D \right)$. And if $(r_a)_{a \in D}$ is unbounded then $\phi((r_a)_{a \in D}) = (r_a)_{a \in D}$ is unbounded, too. So again, $\phi((r_a)_{a \in D}) \in \er^D \setminus \left( \tilde C_p^*(H, D) \cap [1,2]^D \right)$.

(ii) $(\tilde C_p^*(H, D) \cap [1,2]^D, \er^D) \leq_B (\tilde C^*_p(X, D \cup E), \er^{D \cup E})$

\noindent Let $d$ be a compatible metric on $X$. We fix arbitrary unbounded $( s_b )_{b \in D \cup E} \in \er^{D \cup E}$ (this is possible since $D \cup E$ is clearly infinite) and define $\phi \colon \er^D \rightarrow \er^{D \cup E}$ by
$$ \phi ((r_a)_{a \in D}) (b) =
\begin{cases}
s_b & \text{if } (r_a)_{a \in D} \notin [1,2]^D \text{ (and } b \in D \cup E \text{)}, \\
r_b & \text{if } (r_a)_{a \in D} \in [1,2]^D \text{ and } b \in D, \\
\inf \left\{ r_a \frac {d(b,a)} {d(b,H)} \colon a \in D \right\} & \text{if } (r_a)_{a \in D} \in [1,2]^D \text{ and } b \in E.
\end{cases}
$$
We will show that $\phi$ is the required Borel-Wadge reduction of $\tilde C_p^*(H, D) \cap [1,2]^D$ to $\tilde C_p^*(X, D \cup E)$.

For fixed $a_0 \in D$ and $b_0 \in E$, the function $(r_a)_{a \in D} \mapsto r_{a_0} \frac {d(b_0,a_0)} {d(b_0,H)} \in \er$ is continuous (since $\frac {d(b_0,a_0)} {d(b_0,H)}$ is constant). So for every $b_0 \in E$, the function $(r_a)_{a \in D} \mapsto \inf \left\{ r_a \frac {d(b_0,a)} {d(b_0,H)} \colon a \in D \right\} \in \er$ is upper semicontinuous (since it is an infimum of continuous functions). It immediately follows that $\phi$ is Borel measurable.

Suppose that $(r_a)_{a \in D} \in \er^D \setminus \left( \tilde C_p^*(H, D) \cap [1,2]^D \right)$. If $(r_a)_{a \in D} \notin [1,2]^D$ then $\phi((r_a)_{a \in D})$ is unbounded and so $\phi((r_a)_{a \in D}) \in \er^{D \cup E} \setminus \tilde C_p^*(X, D \cup E) $. Otherwise, $(r_a)_{a \in D} \in [1, 2]^D \setminus \tilde C_p^*(H, D)$. Then $(r_a)_{a \in D}$ has no bounded continuous extension from $D$ to $H$ and so $\phi((r_a)_{a \in D})$ clearly has no bounded continuous extension from $D \cup E$ to $X$, in other words again $\phi((r_a)_{a \in D}) \in \er^{D \cup E} \setminus \tilde C_p^*(X, D \cup E)$.

Now suppose that $(r_a)_{a \in D} \in \tilde C_p^*(H, D) \cap [1,2]^D$. Then there is $f \in C_p^*(X)$ with values in $[1,2]$ such that $f(a) = r_a$ for every $a \in D$. By Theorem \ref{Tietze} (used on such $f$) and Remark \ref{remark}, there is $F \in C_p^*(X)$ with values in $[1,2]$ such that $F(b) = \phi((r_a)_{a \in D})(b)$ for every $b \in D \cup E$, and so $\phi((r_a)_{a \in D}) \in \tilde C_p^*(X, D \cup E)$.
\qed

\end{proof}

In the following, by a \emph{continuum}, we mean a nonempty, compact, connected, metrizable topological space. A \emph{Peano continuum} is a locally connected continuum. A topological space is \emph{nowhere locally compact} if no point has a compact neighborhood. One of the tools we will need is the following theorem from \citep{Dijkstra} (nowhere locally compact spaces are called just nowhere compact in \citep{Dijkstra}).

\begin{theorem}[{\citep[Corollary 7]{Dijkstra}}]
\label{compactification}

Every separable metrizable nowhere locally compact space has a compactification which is a Peano continuum.

\end{theorem}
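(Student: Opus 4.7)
My plan is to produce a Peano continuum $K$ in which $X$ sits as a dense subspace. The most natural candidate is the Hilbert cube $Q=[0,1]^\en$ itself, which is compact, connected, locally connected, and metrizable, hence a Peano continuum. The theorem would then follow from the claim that every separable metrizable nowhere locally compact space $X$ admits a dense embedding into $Q$.

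To establish this claim, I would fix a compatible metric on $Q$ together with a countable base $\{U_n\}_{n\in\en}$ of $Q$. Starting from any embedding $\iota_0\colon X\to Q$ (available since $X$ is separable metrizable), I would inductively modify $\iota_0$ into embeddings $\iota_1,\iota_2,\ldots$ so that at stage $n$ the image $\iota_n(X)$ meets $U_n$, $\iota_n$ agrees with $\iota_{n-1}$ outside a subset of $X$ of small diameter, and the sequence $(\iota_n)$ converges uniformly. The limit $\iota=\lim_n \iota_n$ would then be an embedding whose image is dense in $Q$, so that $Q$ serves as the desired Peano continuum compactification. At stage $n$, the modification would consist of choosing a small nonempty open set $V\subseteq X$ away from any previously committed constraint and shifting the values of $\iota_{n-1}$ on a small piece of $V$ so that some point lands in $U_n$. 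The nowhere local compactness of $X$ is what guarantees that such a ``free'' open set $V$ is always available: it prevents $X$ from having a compact neighborhood whose image would be rigidly constrained in $Q$.

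The main obstacle is verifying that the limit map $\iota$ is actually a homeomorphism onto its image, and not merely a continuous map. The standard device is to monitor a ``separation function'' measuring the minimum distance between the images of pairs of points from a fixed countable dense subset of $X$, and to make the perturbations small enough that this function does not collapse in the limit. Controlling this uniformly on a non-compact domain is delicate, and it is exactly here that nowhere local compactness is essential: a compact neighborhood in $X$ would shield some open set of $Q$ from ever being entered by perturbed images, destroying the density. If this direct construction proves too unwieldy in full generality, a cleaner fallback is to build the Peano continuum compactification abstractly, as the inverse limit of a sequence of connected, locally connected finite polyhedra with monotone surjective bonding maps extracted from ever-finer locally finite open covers of $X$. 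Peano continua are precisely such inverse limits, and the nowhere local compactness of $X$ is what allows the nerves of the covers to be taken connected and locally connected at every stage while keeping $X$ dense in the limit.
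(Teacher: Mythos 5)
The paper does not actually prove this statement: it is imported verbatim from Dijkstra [Corollary~7] and used as a black box, so there is no internal proof to compare your attempt with. Judged on its own, your proposal has a genuine gap. The reduction you propose is sound as far as it goes --- if every separable metrizable nowhere locally compact $X$ embeds densely into the Hilbert cube $Q$, then $Q$ itself is the desired Peano compactification, since a dense subset has closure $Q$ --- but that embedding claim carries the entire weight of the theorem, and your sketch does not establish it. The step you yourself flag as ``delicate'' is exactly where the argument lives: a uniform limit of embeddings of a non-compact space need not be injective, and a non-compact domain admits no uniform modulus of injectivity to protect, so the ``separation function'' device has to be set up with real care (summable perturbations measured against a countable family of injectivity constraints, with the stage-$n$ perturbation supported away from the pairs already committed to). None of this is carried out, and it is not routine.

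More seriously, the hypothesis of nowhere local compactness never enters your outline in a checkable way. You assert that it ``guarantees that a free open set $V$ is always available,'' but you do not define what constraint a compact neighborhood would impose or how its absence is exploited. As written, nothing in your construction would visibly fail for $X=[0,1]$, and yet $[0,1]$ has no dense embedding into $Q$ (a compact dense subset of $Q$ is all of $Q$). The place the hypothesis must do work is the standard characterization: $X$ is nowhere locally compact if and only if the remainder $\overline{\iota(X)}\setminus\iota(X)$ is dense in $\overline{\iota(X)}$ for every (equivalently, some) metrizable compactification; your perturbation scheme needs to convert this into room to steer images into each basic open set $U_n$ of $Q$ without an open subset of $Q$ being swallowed into $\iota(X)$. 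Until that mechanism is explicit, the proposal is a plan, not a proof. The fallback via inverse limits of polyhedra with monotone bonding maps is even less developed: even granting that such inverse limits are Peano continua (a Capel-type result), you give no construction of an embedding of $X$ into the inverse limit, let alone a dense one, and again no identifiable use of the hypothesis.
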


Now we are ready for the proof of Theorem \ref{hlavni veta}.

\begin{pot}

By \citep[Lemma 2.3 and the last paragraph in Section 2]{AM}, the measurable space $C_p^*(X)$ is in $\Pi^1_n$ so we only need to show that it is Borel-$\Pi^1_n$-hard. 

Suppose first that $X$ is nowhere locally compact. Then by Theorem \ref{compactification}, $X$ has a compactification $Z$ which is a Peano continuum. Let $d$ be a compatible metric on $Z$ and let $D$ be a countable dense subset of $X$ (then $D$ is also dense in $Z$). We put $W = Z \setminus D$ (so that $W$ is a Polish space when equipped with the topology inherited from $Z$) and $Y = Z \setminus X$. Since $X$ is not $\Sigma^1_{n-1}$ in $Z$, neither is $X \setminus D$ (because $D$ is countable and thus $\Sigma^0_2$ in $Z$). So $X \setminus D$ is not $\Sigma^1_{n-1}$ neither in $W$. It follows that $Y = W \setminus (X \setminus D)$ is not $\Pi^1_{n-1}$ in $W$. By \textsf{PD}, $Y$ is $\Sigma^1_{n-1}$-hard in $W$. By this and \citep[Lemma 4.2]{AM} (in fact, this lemma was first proved in \citep[Lemma 1]{KLW} but formulated only under some more restrictive assumptions on the space $W$), it follows that $\mathcal K(Y)$ (i.e., the set of all compact subsets of $Y$) is $\Pi^1_n$-hard in $\mathcal K(W)$ (i.e., the Polish space of all compact subsets of $W$ equipped with the Vietoris topology). So, due to Remark \ref{vysvetleni}, it suffices to show that $(\mathcal K(Y),\mathcal K(W)) \leq_W (\tilde C^*_p(X,D),\er^D)$.

We define $\phi \colon \mathcal K(W) \rightarrow \er^D$ by
$$ \phi(K) = \left( \sin \left( \frac 1 {d(a,K)} \right) \right)_{a \in D}, \ \ \ K \in \mathcal K(W). $$
This is a correct definition since for every $K \in \mathcal K(W)$, we have $K \cap D = \emptyset$ and so $d(a,K) > 0$ for every $a \in D$. The map $\phi$ is obviously continuous. We will show that $\phi$ is the required Wadge reduction of $\mathcal K(Y)$ to $\tilde C^*_p(X,D)$. If $K \in \mathcal K(Y)$ then $K \cap X = \emptyset$ and so the function $f \colon X \rightarrow \er$ defined by $f(x) = \sin \left(\frac 1 {d(x,K)}\right)$, $x \in X$, is a bounded continuous extension of $\phi(K)$ from $D$ to $X$. So we have $\phi(K) \in \tilde C^*_p(X,D)$ whenever $K \in \mathcal K(Y)$. Now suppose that $K \in \mathcal K(W) \setminus \mathcal K(Y)$. Then we can find some $x \in K \cap X$. Since $Z$ is a Peano continuum, it is locally connected at $x$. So for every $n \in \en$, there is an open connected subset $U_n$ of $Z$ such that $x \in U_n \subseteq \{ z \in Z \colon d(z, x) < \frac 1 n \}$. Let us fix $n \in \en$ for now. The function $\psi \colon Z \rightarrow \er$ defined by $\psi (z) = d(z, K)$, $z \in Z$, is continuous, and so the image $\psi (U_n)$ of the set $U_n$ is a connected subset of $\er$. Since $x \in K \cap U_n$, we have $0 \in \psi (U_n)$. And since $D$ intersects $U_n$ (it is dense in $Z$) and $D \cap K = \emptyset$, the function $\psi$ also attains some positive values in $U_n$. So there is $k_n \in \en$ such that the connected set $\psi (U_n)$ contains the interval $[0, \frac 1 { 2k_n\pi })$. Then we can find $z_n \in U_n$ such that
$$ \psi (z_n) =
\begin{cases}
\frac 1 { 2k_n \pi + \frac 1 2 {\pi}} & \text{if } n \text{ is odd}, \\
\frac 1 { 2k_n \pi + \frac 3 2 \pi } & \text{if } n \text{ is even}.
\end{cases}
$$
In this way, we obtain a sequence $(z_n)_{n \in \en}$ in $Z$ such that for every $n \in \en$, we have $z_n \in U_n$, $\psi (z_n) > 0$ and
$$ \sin \left( \frac {1} {\psi (z_n)} \right) =
\begin{cases}
1 & \text{if } n \text{ is odd}, \\
-1 & \text{if } n \text{ is even}.
\end{cases}
$$
The function $z \in Z \setminus K \mapsto \sin \left( \frac 1 { \psi (z) } \right)$ is continuous and $D \cup \{ z_n \colon n \in \en \}$ is a subset of its domain $Z \setminus K$, so by the density of $D$ in $Z$, we can find a sequence $(a_n)_{n \in \en}$ in $D$ such that for every $n \in \en$, we have $a_n \in U_n$ and
$$ \phi (K) (a_n) = \sin \left( \frac {1} {\psi (a_n)} \right)
\begin{cases}
>  \frac 1 2 & \text{if } n \text{ is odd}, \\
< - \frac 1 2 & \text{if } n \text{ is even}.
\end{cases}
$$
Clearly $\lim\limits_{n \rightarrow \infty} a_n = x$ but $\lim\limits_{n \rightarrow \infty} \phi (K) (a_n)$ does not exist. So $\phi(K)$ cannot be continuously extended from $D$ to $D \cup \{ x \}$. It follows that $\phi(K) \in \er^D \setminus \tilde C^*_p(X,D)$ whenever $K \in \mathcal K(W) \setminus \mathcal K(Y)$. This completes the proof for $X$ being nowhere locally compact.

Now suppose that $X$ is arbitrary. We will construct a decreasing (with respect to inclusion) transfinite sequence $(X_{\alpha})_{\alpha < \omega_1}$ of closed subspaces of $X$ in the following way. We start by $X_0=X$. Now suppose that for some $\alpha < \omega_1$, we already have $X_{\beta}$ for every $\beta < \alpha$. If $\alpha = \beta + 1$ for some ordinal $\beta$, we put
$$ X_{\alpha} = X_{\beta} \setminus \bigcup \{ V \subseteq X_{\beta} \colon V \text{ is open and relatively compact in } X_{\beta} \}. $$
And if $\alpha$ is a limit ordinal, we put $X_{\alpha} = \bigcap\limits_{\beta < \alpha} X_{\beta}$. Since $X$ is second countable, there is some $\alpha < \omega_1$ such that $X_{\alpha + 1} = X_{\alpha}$. Let $\alpha_0$ be the least such $\alpha$, then $X_{\alpha_0}$ is clearly nowhere locally compact. Moreover, it easily follows by the hereditary Lindel\"ofness of $X$ that $X \setminus X_{\alpha_0}$ is contained in a $\sigma$-compact subset of $X$, and so the topological space $X_{\alpha_0}$ is clearly in $\Sigma^1_n$ but not in $\Sigma^1_{n-1}$. By the previous step, the measurable space $C^*_p(X_{\alpha_0})$ is Borel-$\Pi^1_n$-complete. Let $D$, $E$ be countable dense subsets of $X_{\alpha_0}$, $X \setminus X_{\alpha_0}$ respectively. Since $X_{\alpha_0}$ is a closed subspace of $X$, we have $(\tilde C^*_p(X_{\alpha_0}, D), \er^D) \leq_B (\tilde C^*_p(X, D \cup E), \er^{D \cup E})$ by Proposition \ref{redukce}. But as it was already explained, the measurable spaces $C_p^*(X_{\alpha_0})$, $C_p^*(X)$ are isomorphic to the subspaces $\tilde C^*_p(X_{\alpha_0}, D)$, $\tilde C^*_p(X, D \cup E)$ of the standard Borel spaces $\er^D$, $\er^{D \cup E}$ respectively, and so the conclusion immediately follows.
\qed

\end{pot}


\bibliography{bibliography}

\begin{thebibliography}{1}
\expandafter\ifx\csname url\endcsname\relax
  \def\url#1{\texttt{#1}}\fi
\expandafter\ifx\csname urlprefix\endcsname\relax\def\urlprefix{URL }\fi
\expandafter\ifx\csname href\endcsname\relax
  \def\href#1#2{#2} \def\path#1{#1}\fi

\bibitem{Kechris}
A.~S. Kechris, \href{http://dx.doi.org/10.1007/978-1-4612-4190-4}{Classical
  descriptive set theory}, Vol. 156 of Graduate Texts in Mathematics,
  Springer-Verlag, New York, 1995.
\newblock \href {http://dx.doi.org/10.1007/978-1-4612-4190-4}
  {\path{doi:10.1007/978-1-4612-4190-4}}.
\newline\urlprefix\url{http://dx.doi.org/10.1007/978-1-4612-4190-4}

\bibitem{AM}
A.~Andretta, A.~Marcone, Pointwise convergence and the {W}adge hierarchy,
  Comment. Math. Univ. Carolin. 42~(1) (2001) 159--172.

\bibitem{Christensen}
J.~P.~R. Christensen, Topology and {B}orel structure, North-Holland Publishing
  Co., Amsterdam-London; American Elsevier Publishing Co., Inc., New York,
  1974, descriptive topology and set theory with applications to functional
  analysis and measure theory, North-Holland Mathematics Studies, Vol. 10.
  (Notas de Matem{\'a}tica, No. 51).

\bibitem{DM}
T.~Dobrowolski, W.~Marciszewski, Classification of function spaces with the
  pointwise topology determined by a countable dense set, Fund. Math. 148~(1)
  (1995) 35--62.

\bibitem{Ok}
O.~Okunev, On analyticity in cosmic spaces, Comment. Math. Univ. Carolin.
  34~(1) (1993) 185--190.

\bibitem{Dijkstra}
J.~J. Dijkstra, \href{http://dx.doi.org/10.1090/S0002-9939-08-09567-1}{On
  compactifications with path connected remainders}, Proc. Amer. Math. Soc.
  136~(12) (2008) 4461--4466.
\newblock \href {http://dx.doi.org/10.1090/S0002-9939-08-09567-1}
  {\path{doi:10.1090/S0002-9939-08-09567-1}}.
\newline\urlprefix\url{http://dx.doi.org/10.1090/S0002-9939-08-09567-1}

\bibitem{Ke}
B.~{von Ker\'ekj\'art\'o}, {Vorlesungen \"uber Topologie. I.:
  Fl\"achentopologie. Mit 80 Textfiguren.}, {Berlin: J. Springer, (Die
  Grundlehren der mathematischen Wissenschaften. Bd. 8.) VII u. 270 S. gr.
  $8^\circ$ (1923).} (1923).

\bibitem{Husek}
M.~Hu{\v{s}}ek, Extension of mappings and pseudometrics, Extracta Math. 25~(3)
  (2010) 277--308.

\bibitem{KLW}
A.~S. Kechris, A.~Louveau, W.~H. Woodin,
  \href{http://dx.doi.org/10.2307/2000338}{The structure of {$\sigma$}-ideals
  of compact sets}, Trans. Amer. Math. Soc. 301~(1) (1987) 263--288.
\newblock \href {http://dx.doi.org/10.2307/2000338}
  {\path{doi:10.2307/2000338}}.
\newline\urlprefix\url{http://dx.doi.org/10.2307/2000338}

\end{thebibliography}

\end{document}